\newtheorem{proposition}{Proposition}[section]
\newtheorem{definition}[proposition]{Definition}
\newtheorem{lemma}[proposition]{Lemma}
\newtheorem{theorem}[proposition]{Theorem}
\newtheorem{conjecture}[proposition]{Conjecture}
\def\mun{^{-1}}
\def\bsl{\backslash}
\def\bs{\boldsymbol}
\def\pni{\par\noindent}
\def\ptf{~.}
\def\vfe{\vfill\eject}
\def\com#1{\quad\hbox{#1}\quad}
\def\newcor{\global\advance\Cor by 1
\par\bigskip\noindent (\romannumeral\Cor) - }
\def\FM{\mathbb F}
\def\GM{\mathbb G}
\def\ctyc{\mathcal{C}_c^\infty}
\def\coker{\mathrm{Coker}}
\def\vol{\mathrm{vol}\,}
\def\Fbar{\overline{F}}
\def\ve{\varepsilon}
\def\G{G}
\def\H{H}
\def\T{T}
\def\U{U}
\def\Z{Z}
\def\tC{{\widetilde C}}
\def\tG{{\widetilde G}}
\def\tI{{\widetilde I}}
\def\tT{{\widetilde T}}
\def\tZ{{\widetilde Z}}
\def\tf{\widetilde{f}}
\def\Q{Q}
\def\KK{\mathcal K}
\def\ECell{\bs\EC_{ell}}
\def\EC{\mathcal{E}}
\def\UC{\mathcal{U}^\G}
\def\SUC{\mathcal {SU}}
\def\SU{\mathcal{SU}}
\def\SGamma{\mathcal{S}\Gamma}
\def\x{x}
\def\g{g}
\def\tg{{\widetilde\g}}
\def\vef{{\ve_{E/F}}}
\def\tTef{\tT_{E/F}}
\def\Tef{T_{E/F}}
\def\Nef{N_{E/F}}
\def\orb{\mathcal{O}}
\def\Sorb{\mathcal{SO}}
\def\bu{{u}}
\def\bsu{\bs{u}}
\def\bstu{\bs{\tilde u}}
\def\bnu{{u_0}}
\def\bsnu{\bs{u_0}}
\def\bstnu{\bs{\tilde u_0}}
\def\iz{z}
\def\cft{c}
\date{\today}
\author{Jean-Pierre Labesse}
\address{Aix-Marseille UniversitŽ, CNRS, I2M, Marseille, France}
\title{Germ expansion for $\bs {SL(2)}$ in arbitrary  characteristics}
\begin{document}


\begin{abstract} Let $F$ be a  local field  of characteristic $p$ and $G$ be a connected reductive group over $F$.
Recall that Shalika's germ expansion \cite{Sh} of orbital integrals of regular semisimple elements
near the identity, when it exists, is a sum indexed by
the set $\UC$ of unipotent conjugacy classes in $G(F)$. Observe that if $G=SL(2)$ the set $\UC$
is always  compact; it is finite if $p\ne2$ while it is uncountable if $p= 2$. As a consequence,
Shalika's germ expansion for elliptic elements does not make sense 
if $p=2$. On the other hand the endoscopic expansion of elliptic orbital integrals always exists and yields 
a germ expansion equivalent if $p\ne2$ (up to a Fourier transform) 
to Shalika's germ expansion but is new if $p=2$.
\end{abstract}

\maketitle
\section{Introduction}
The asymptotic behaviour of orbital integrals of semisimple elements in a connected reductive group over $F$
is controlled by Shalika's germs when $F$ is a local field of characteritic zero and more generally whenever 
the number of unipotent orbits is finite and no inseparability occurs. The simplest case where these 
hypothesis fail is $SL(2,F)$ with $F$ of characteristic 2 since, for such a group, there is an uncountable
set of unipotent orbits.

Shalika's germ expansion is a very useful tool in local harmonic analysis
and in particular one would need a substitute for them to extend, in positive characteristic, the proof
by Kottwitz of Weil's conjecture on Tamagawa numbers for global fields of characteristic zero. 
Observe that the case of quasi-split groups
can readily be treated using Langlands-Lai technique since the spectral decomposition is available,
thanks to Morris, in arbitrary characteristic. But inner forms are, for the time being out of reach,
via Kottwitz' techniques, in small characteristics. We understand there is a proof by completely different
methods for positive characteristics but one would like to have a uniform approach independent of 
the characteristic.

This is our main motivation for looking more carefully at a very simple special case. It turns out that
a natural substitute valid in any characteristic can be given, for  $SL(2)$, using endoscopy. 
This appeared, with detailed proofs, in the lecture notes \cite{L1} where local and global 
aspects of the contribution of unipotent elements to harmonic analysis for $SL(2)$
in arbitrary characteristics are discussed. It seems reasonable to expect that
similar techniques will provide, for arbitrary reductive groups, the substitue one is looking for. 
This will be stated as a conjecture. But the state
of art, in (small with respect to the order of the Weyl group)
positive characteristic, does not allow to go much farther for the time being.

\section{Notation}
From now on, except in section \ref{conj}, let $\G=SL(2)$ and
$\tG=GL(2)$. We denote by $\Z$ the center of $\G$ and by $\tZ$ the center of $\tG$.
Then $\tG$ act by automorphisms, induced by conjugation, on $\G$.

Let $F$ be a non-archimedean local field of charateristic $p$.
The locally compact groups $\G(F)$ and $\tG(F)$ are endowed with  Haar measures denoted $d\g$ 
and $d\tg$ respectively.
Now let $f\in\ctyc(\G(F))$ and $\x\in\G(F)$.
We denote by $I_x$  the centralizer of $x$ in $\G$ (as a scheme) and by $\tI_x$ its centralizer in $\tG$.
Observe that $I_x(F)$ is always unimodular. This allows to define orbital integrals:
$$\orb_\G(x,f)=\int_{I_\x(F)\bsl\G(F)}f(\g\mun x\g)d\dot g\com{and}
\orb_\tG(x,f)=\int_{\tI_\x(F)\bsl\tG(F)}f(\tg\mun x\tg)d\dot \tg$$
where $d\dot \g$ and $d\dot \tg$ are quotient measures.

The quotient group $$Q_F:= (F^\times)^2\bsl F^\times$$ 
 is always compact. It is  finite if $p\ne2$ but  is uncountable when $p=2$.
 The determinant induces an isomorphism
$$\tZ(F)G(F)\bsl\tG(F)\simeq Q_F\ptf$$ 
The set $\UC$ of conjugacy classes of non trivial unipotent elements in $\G(F)$
is easily seen to be also in (non canonical) bijection with $Q_F$.

We shall denote by $\KK$ the Pontryagin dual of $\Q_F$. Its elements are of order 2 and 
Class Field theory shows that they are in bijection
with isomorphism classes of separable quadratic extensions $E/F$. The trivial character corresponds to
the split extension $E=F\oplus F$.

Let $E/F$ be a separable quadratic extension (that may be split). 
We denote by $e\mapsto\overline e$ the non trivial Galois automorphism (when $E=F\oplus F$
it acts by the exchange of coordinates).
Let $E^1$ denote the subgroup of elements of norm
1 in $E^\times$. We denote by $\tTef$ the restriction of scalar
for $E/F$ of $\GM_m$ and by $\Tef$ the subgroup of elements of norm 1. In particular we have
$$\tTef(F)=E^\times\com{and}\Tef(F)=E^1\ptf$$
We denote by $\vef$ the character of $F^\times$ associated to the quadratic extension $E/F$
by Class Field theory. If $E$  is a field
$\vef$ is the non trivial character of
$\Nef E^\times\bsl F^\times$; it is of order 2, while
$\vef$ is the trivial character if $E=F\oplus F$.

Let $T$ be a (maximal) torus in $\G$ and $\tT$ its centralizer in $\tG$.
There is a separable quadratic extension $E/F$  such that $\T$
(resp. $\tT$) is isomorphic to $\Tef$ (resp $\tTef$).  Conversely for any separable quadratic extension
$E/F$ there is a torus $T\subset\G$ such that $\T$ is isomorphic to
$\Tef$. One should be warned that, given $E$, there may be more than one conjugacy class of  tori in $\G$
isomorphic to $\Tef$.



\section{Stable conjugacy}

Two elements in $\G(F)$ (resp. $\tG(F)$)
will be said to be {\it stably conjugate} if they are conjugate in $\G(\Fbar)$ (resp. $\tG(\Fbar)$).
Observe that, in the literature, stable conjugacy is only defined for semisimple 
elements and even for them it is slightly more subtle for non strongly regular elements in arbitrary reductive group
(see for example  \cite[section I.4.4]{MW}).
But this simple definition works nicely for all elements 
in groups under consideration here.

The set $C(x)$ of conjugacy orbits under $\G(F)$ inside the set
of  rational points in the geometric orbit is parametrized by
 $$C(x)\simeq\coker[H^0_{f}(F,\G)\to H^0_{f}(F,I_x\bsl\G)]
\simeq\ker[H^1_{f}(F,I_x)\to H^1_{f}(F,\G)]$$
where the $H^i_f$ are flat cohomology sets. We have to use flat topology since, in characteristic 2,
the scheme $I_x$ has an infinitesimal factor when $x=zu$ with $z\in Z(F)$ and $u$ 
a non trivial unipotent. 
For all other elements one could use Žtale
cohomology as well since Žtale and flat cohomology coincide 
for smooth connected groups and one may forget about the index $f$. 
Since $H^1_{f}(F,\G)$ is trivial we have
$$C(x)\simeq H^1_{f}(F,I_x)\ptf$$
Then standard cohomology results show that $C(x)$ is trivial if $x$ belongs to a split torus, it has two elements 
if $x$ is regular semisimple in a non split torus and finally
$$C(x)\simeq Q_F$$ when $x$ has a non-trivial unipotent factor in his Jordan decomposition.
In particular it is an uncountable set when $F$ is of characteristic 2.
We also observe that if $T$ is a torus in $\G$
$$H^0(F,\T\bsl\G)=\tT(F)\bsl\tG(F)\ptf$$

\begin{lemma} \label{tg}
In $\G(F)$ (resp. $\tG(F)$) two elements are stably conjugate if and only if they are
conjugate under $\tG(F)$.
\end{lemma}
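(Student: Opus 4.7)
\medskip
\noindent\emph{Plan of proof.} The plan is to first dispose of the $\tG(F)$ half of the lemma and bootstrap the $\G(F)$ half from it. For $\tG$, the non-trivial direction asserts that two $\tG(\Fbar)$-conjugate elements of $\tG(F)$ are conjugate already under $\tG(F)$. The standard cohomological parametrization identifies the set of $\tG(F)$-orbits inside the stable orbit of $x$ with
$$\ker\bigl[H^1(F,\tI_x)\to H^1(F,\tG)\bigr],$$
and Hilbert 90 yields $H^1(F,\tG)=H^1(F,GL(2))=1$. It then suffices to check $H^1(F,\tI_x)=1$ case by case: if $x$ is central in $\tG$ then $\tI_x=\tG$; if $x$ is regular semisimple then $\tI_x\simeq\tTef$ for some separable quadratic $E/F$, and Shapiro's lemma combined with Hilbert 90 over $E$ forces the vanishing; if $x=zu$ with $u$ a non-trivial unipotent, the centralizer $\tI_x$ is a smooth extension of $\GM_m$ by $\GM_a$, whose first cohomology is killed by the multiplicative and additive Hilbert 90. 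In each case the centralizer in $GL(2)$ is smooth, so ordinary \'etale Galois cohomology suffices.

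\medskip
The forward direction of the $\G$ case is then automatic: two $\G(\Fbar)$-conjugate elements of $\G(F)$ are \emph{a fortiori} $\tG(\Fbar)$-conjugate, hence $\tG(F)$-conjugate by what has just been proved. For the reverse direction the key input is the decomposition $\tG(\Fbar)=\tZ(\Fbar)\cdot\G(\Fbar)$, valid because every element of $\Fbar^\times$ is a square. Concretely, if $\tg\in\tG(F)$ conjugates $x\in\G(F)$ into $y\in\G(F)$, pick $s\in\Fbar^\times$ with $s^2=\det(\tg)$ and set $g:=s^{-1}\tg\in\G(\Fbar)$; then $\tg=(sI)\cdot g$ with $sI\in\tZ(\Fbar)$ central, so conjugation by $\tg$ coincides with conjugation by $g$. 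Hence $x$ and $y$ are $\G(\Fbar)$-conjugate, i.e.\ stably conjugate in $\G$.

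\medskip
No single step is really an obstacle; what is worth emphasizing is that routing the argument through $\tG$ rather than working directly inside $\G$ is precisely what makes the proof uniform in the characteristic. In characteristic $2$ the centralizer in $\G$ of $zu$, with $u$ a non-trivial unipotent, is non-smooth, and the flat versus \'etale subtlety discussed in the paragraphs preceding the lemma would come into play; the corresponding centralizer in $\tG$ remains smooth, and this is what allows ordinary Galois cohomology (together with the elementary extraction of a square root in $\Fbar^\times$) to close the argument.
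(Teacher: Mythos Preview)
Your proof is correct and follows essentially the same route as the paper: reduce to $\tG$ via the identity $\tG(\Fbar)=\tZ(\Fbar)\cdot\G(\Fbar)$, then use the cohomological parametrization of orbits together with the case-by-case vanishing of $H^1(F,\tI_x)$ for the smooth connected centralizers in $\tG$. The only cosmetic difference is that the paper identifies $\tI_{zu}$ explicitly as the direct product $\GM_m\times\GM_a$ rather than as an extension, but your argument via the long exact sequence works equally well.
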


\begin{proof}
Firstly one observes that since $\G(\Fbar).\tZ(\Fbar)=\tG(\Fbar)$
conjugacy under $\G(\Fbar)$ and $\tG(\Fbar)$ coincide.
Then it remains to show that conjugacy and stable conjugacy coincide
in $\tG(F)$.  Observe that $\tI_x$ 
is smooth and connected for all $x\in\tG(F)$. In fact 
$\tI_x=Res_{E/F}\GM_m$ for some separable quadratic extension
if $x$ is regular semisimple, $\tI_x=\tG$ if $x$ is central while
 $\tI_x=\GM_m\times\GM_a$ if the Jordan decomposition of $x$ has a non trivial unipotent component.
The set $\tC(x)$ of conjugacy orbits under $\tG(F)$ inside the set
of  rational points in the geometric orbit is parametrized by
 $$\tC(x)\simeq\coker[H^0_{f}(F,\tG)\to H^0_{f}(F,I_x\bsl\tG)]
\simeq\ker[H^1_{f}(F,\tI_x)\to H^1_{f}(F,\tG)]\ptf$$
It remain to observe, that  $H^1_{f}(F,\tI_x)$ is trivial for any $x\in\tG(F)$. 
\end{proof}

\section{Endoscopic expansion}

For our $\G$ an endoscopic character $\kappa$ is an element of $\KK$
(the Pontryagin dual of $\Q_F$)
viewed as a character of $\tG(F)$ thanks to the isomorphism
$$\tZ(F)G(F)\bsl\tG(F)\simeq Q_F= (F^\times)^2\bsl F^\times$$ 
induced by the determinant.
This allows to define $\kappa$-orbital integrals by
$$\orb_\G^\kappa(\x,f)=
\int_{\Q_F}\kappa(q)\orb_\G(x^q,f)\, dq
=\int_{\tZ(F)I_x(F)\bsl\tG(F)}\kappa(\det \tg)f(\tg\mun \x  \tg)\,d\dot\tg$$
where, by abuse of notation, $x^q=\tg\mun\x\tg$
and $\tg$ is any element in $\tG(F)$ that maps to $q$ via the determinant.
This makes sense since $\orb_\G(x^q,f)$ is independent of the choice of $\tg$.
The compact group $Q_F$ is endowed with the canonical
Haar measure: $\vol(Q_F)=1$. 

When $\kappa=1$ integral $\orb_\G^1(x,f)$ will also be denoted $\Sorb_\G(x,f)$ 
and is called the stable orbital integral. For $\kappa\not=1$ they are called unstable.
By Fourier inversion, we have
$$\orb_\G(\x,f)=\sum_{\kappa\in\KK}\orb_\G^\kappa(\x,f)\ptf$$
The series converges since all but a finite number, depending on $f$, of $\kappa$-orbital integrals
 $\orb_\G^\kappa(x,f)$ do vanish. Let us discuss the various cases. For $z\in\Z(F)$ we have
$$f(z)=\orb_\G(z,f)=\Sorb_\G(z,f)
\com{while}\orb_\G^\kappa(z,f)=0 \com{for all} \kappa\not=1\ptf$$
Now consider a torus  $\T$ in $\G$ and $t\in T(F)$ regular.
Since $$\tg\mapsto f(\tg\mun t \tg)$$ is  left-invariant under
 $\tT(F)$, the centralizer of $t$ in $\tG(F)$,
 the $\kappa$-orbital integral $\orb_\G^\kappa(t,f)$ vanishes unless
$\kappa$ is trivial on $\tT(F)$ which implies
 $\kappa=1$ or $\kappa=\vef$ where $E/F$ is the quadratic extension 
 defined by $\T$. In such a case we have\footnote{We observe that the open inclusion
$$\T(F)\bsl\G(F)\subset\tT(F)\bsl\tG(F)$$ 
defines an invariant measure on $\tT(F)\bsl\tG(F)$
which may not be the measure used to define $\orb_\G^\kappa(t,f)$.}
 $$\orb_\G^\kappa(t,f)=\int_{H^0(F,\T\bsl\G)}\kappa(\det \tg)f(\tg\mun t \tg)d\dot\tg=
\int_{\tT(F)\bsl\tG(F)}\kappa(\det \tg)f(\tg\mun t \tg)d\dot\tg\ptf$$
 If $E$ is a field one has $$\orb_\G^\vef(t,f)=\frac{1}{2}\big(\orb_\G(t,f)-\orb_\G(t',f)\big)$$
where $t'$ is stably conjugate but non conjugate to $t$,
i.e. $t'=\tg\mun t\tg$ with  $\tg\in\tG(F)$ and $\vef(\det \tg)=-1$.
When $E=F\oplus F$ we have $\vef=1$ and  
$$\orb_\G(t,f)=\Sorb_\G(t,f)=\orb_\G^{\vef}(t,f)\ptf$$
Finally, when $x=zu$ with $z\in\Z(F)$ and $u$ a non trivial unipotent there is no {\it a priori}
vanishing for $\orb_\G^\kappa(x,f)$.

For the group $\tG$, lemma \ref{tg} shows that
orbital integrals are automatically stable. The same is trivially true for tori 
viewed as reductive groups since conjugacy classes are singletons. 

\section{Germ expansion for $p\not=2$}

Consider the unipotent element
$${\bnu}:=\begin{pmatrix}1&1\cr0&1\cr\end{pmatrix}\ptf$$
The map $\nu:\U(F)\to F$:
$$ \begin{pmatrix}1&\eta\cr0&1\cr\end{pmatrix}\mapsto\eta$$
induces a bijection between the set $\UC$ of rational conjugacy classes of 
non trivial unipotent elements in $\G(F)$
and the compact group $\Q_F$ where $\bnu$ corresponds to $1\in\Q_F$.
Assume now $p\not=2$; then the set $\UC$ is finite
and one has Shalika's germ expansion when $t$ tends to $z\in\Z(F)$:
$$\orb_\G(t,f)= \Gamma^\G_1(t)f(z)+\sum_{\bsu\in\UC} \Gamma^\G_{\bsu}(t)\orb_\G(z u,f)$$
where we denote by $u$ a representative of the conjugacy class  $\bsu$.
 By Fourier inversion we see that, if we put $<\kappa, \bsu>:=\kappa(\nu(\bsu))$, then
$$\orb_\G(z\bu,f)=\sum_{\kappa\in\KK}<\kappa, \bsu>\orb_\G^\kappa(z{\bnu},f)\ptf$$
Shalika's germ expansion can be rewritten
$$\orb_\G(t,f)= \Gamma^\G_1(t)f(z)+
\sum_{\bsu\in\UC}\sum_{\kappa\in\KK}<\kappa, \bsu> 
\Gamma^\G_{\bsu}(t)\orb_\G^\kappa({\iz \bnu},f)\ptf$$
It is then natural to introduce  $\kappa$-germs:
$$\Gamma^{\G,\kappa}_{\iz \bnu}(t):=\sum_{\bsu\in\UC}<\kappa, \bsu> \Gamma^\G_{\bsu}(t)\ptf$$
This yields an expansion with $\kappa$-germs
$$\orb_\G(t,f)= \Gamma^\G_1(t)f(z)+\sum_{\kappa\in\KK}
 \Gamma^{\G,\kappa}_{\iz \bsnu}(t)\orb_\G^\kappa({\iz \bnu},f)\ptf$$
Combined with endoscopic transfer this yields another form of germ expansion 
that will be shown to make sense in arbitrary characteristic
for $\G=SL(2)$.

\section{Endoscopic transfer}

We now introduce objects for local endoscopy. An endoscopic pair $\EC$ is a pair $\{H,\kappa\}$
where $H$ is a quasi-split connected reductive group over $F$ said to be an endoscopic group and $\kappa$
is an endoscopic character. For $\G=SL(2)$ endoscopic pairs are of the form
$\{\G,1\}$ or $\{\Tef,\vef\}$ where $E/F$ is a separable quadratic extension.
Let $\EC= \{H,\kappa\}$ be an endoscopic pair and $\T$ a torus in $\G$.
Assume there is an embedding $$\iota:T(F)\to\H(F)\ptf$$
This is always possible if $\H=\G$ while, if $\H=\Tef$, this implies that $E/F$
is the quadratic extension defined by $T$. In such a case we denote by $\overline\iota$
the compositum of $\iota$ with the Galois automorphism,
so that $\iota(t)$ and $\overline\iota(t)$ can be seen as the eigenvalues of $t$.
Moreover we fix a regular element $\tau\in\T(F)$.  
We may now define transfer factors.
\begin{definition}\label{FTr} \label{ft}
For $\EC= \{\G,1\}$ the transfert factor is the scalar
$$\Delta^\EC(t)=1\ptf$$
For $\EC=\{\Tef,\vef\}$ and $\T$ isomorphic to $\Tef$ the transfert factor is the scalar
$$\Delta^\EC(t)=\cft\,\vef\Big(\frac{\iota(t)-\overline\iota(t)}{\iota(\tau)-\overline\iota(\tau)}\Big)
{\vert{\iota(t)-\overline\iota(t)}\vert_E}$$ 
where $\cft$ is a constant.
\end{definition}
 
We shall not discuss natural choices for the constant $c$ since this is irrelevant for what follows.
\begin{definition}\label{def}
Given an endoscopic pair $\EC=\{H,\kappa\}$,
a function $$f^\EC\in\ctyc(H(F))$$ will be called an endoscopic transfer of $f$ if,
given  a torus $T$ in $\G$ together with an injection $\iota:T\to H$ then, for any regular 
$t\in\T(F)$, there is an identity of the form
$$\Sorb_\H(\iota(t),f^\EC)=\Delta^\EC(t)\orb_\G^\kappa(t,f)\ptf$$
\end{definition}

\begin{theorem}\label{trans} Given $f\in\ctyc(\G(F))$ 
an endoscopic transfer $f^\EC$ always exists. Moreover
$$f^{\EC}(\overline\iota(t))=f^{\EC}({\iota(t)})\ptf$$
\end{theorem}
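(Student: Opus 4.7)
The argument splits according to the two kinds of endoscopic pairs. For the trivial pair $\EC=\{\G,1\}$ the transfer factor is $1$ and $\kappa=1$, so the required identity collapses to $\Sorb_\G(\iota(t),f^\EC)=\Sorb_\G(t,f)$; taking $f^\EC:=f$ works because $\iota(t)$ is stably conjugate to $t$ in $\G(F)$ and stable orbital integrals are invariants of the stable class. There is no canonical Galois automorphism of $H=\G$ to feed into the construction of $\overline\iota$, so the symmetry statement is vacuous in this case.

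For the nontrivial pair $\EC=\{\Tef,\vef\}$, $H=\Tef$ is a torus with $H(F)=E^1$. Since the stable orbital integral on a torus is simply evaluation, the defining identity forces
$$f^\EC(\iota(t))=\Delta^\EC(t)\,\orb_\G^\vef(t,f)$$
on regular elements. I would first check that this prescription is unambiguous: two triples $(T,\iota,t)$ and $(T',\iota',t')$ with common image $s=\iota(t)\in E^1$ produce stably conjugate $t,t'\in\G(F)$ (they share the eigenvalue pair $\{s,\bar s\}$), so the $\vef$-orbital integrals agree, while $\Delta^\EC$ depends only on $s,\bar s$ and on the fixed $\tau$. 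Compact support of $f^\EC$ is automatic since $E^1$ is compact, and local constancy on the regular set $E^1\setminus\{\pm 1\}$ follows from the standard local constancy of orbital integrals of functions in $\ctyc$ and of $\Delta^\EC$. The crucial remaining step is the extension at the singular points $\pm 1\in E^1$: there $\orb_\G^\vef(t,f)$ diverges, but the vanishing of $|\iota(t)-\overline\iota(t)|_E$ in $\Delta^\EC$ is calibrated to cancel the divergence and yield a locally constant boundary value. This cancellation is where germ expansions enter; for $p\ne 2$ it is a consequence of Shalika's germs applied to $\orb_\G^\vef$, and for $p=2$ one must invoke the substitute developed in \cite{L1}. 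This is the main obstacle.

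For the symmetry $f^\EC(\overline\iota(t))=f^\EC(\iota(t))$, I would use $\overline\iota(t)=\iota(t)^{-1}=\iota(t^{-1})$, valid since $\iota(t)\in E^1$, to rewrite the claim as $\Delta^\EC(t^{-1})\,\orb_\G^\vef(t^{-1},f)=\Delta^\EC(t)\,\orb_\G^\vef(t,f)$. The identity $\iota(t^{-1})-\overline\iota(t^{-1})=-(\iota(t)-\overline\iota(t))$ immediately gives $\Delta^\EC(t^{-1})=\vef(-1)\,\Delta^\EC(t)$. A parallel analysis of the orbital integral hinges on whether the nontrivial element of $N(\T)/\T$ lifts to $\G(F)$: by the cohomology sequence attached to $1\to \T\to N(\T)\to\ZM/2\to 1$, this happens precisely when $\vef(-1)=1$, in which case $t$ and $t^{-1}$ are $\G(F)$-conjugate and the two integrals coincide; when $\vef(-1)=-1$ the obstruction places $t$ and $t^{-1}$ in the two distinct rational classes of their common stable class, and the $\vef$-twist supplies a sign, yielding $\orb_\G^\vef(t^{-1},f)=\vef(-1)\,\orb_\G^\vef(t,f)$ in every case. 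The two factors of $\vef(-1)$ cancel, giving the symmetry in characteristic $\ne 2$; in characteristic $2$ the identity $-1=1$ trivializes the whole discussion.
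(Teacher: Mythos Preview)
Your approach has a genuine circularity in the $p=2$ case that undermines the whole strategy. You propose to extend $f^\EC$ smoothly across the singular points $\pm 1\in E^1$ by invoking germ expansions: Shalika's for $p\ne 2$, and ``the substitute developed in \cite{L1}'' for $p=2$. But look at the logical flow of the present paper (which is the published account of \cite{L1}): Lemma~\ref{unstable}, the germ expansion for $\orb_\G^\vef$ near $z\in Z(F)$, is deduced \emph{from} Theorem~\ref{trans}. The substitute germ expansion you want to invoke is a \emph{consequence} of the transfer theorem, not an independent input to it. So your proposed proof of smoothness at $\pm 1$ in characteristic $2$ assumes what is to be shown.

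The paper's route avoids this entirely. It cites an explicit computation, namely \cite[Lemma~2.1]{LL} (itself modeled on \cite[Lemma~7.3.2]{JL}), which establishes directly that the prescribed function on $E^1$ is locally constant everywhere, including at $\pm 1$, without any appeal to germ expansions; the point emphasized in the paper is that although \cite{LL} works in characteristic zero, the computation in question is characteristic-free. This is precisely why the transfer can serve as the \emph{starting point} for the germ expansion in Lemma~\ref{unstable} and Theorem~\ref{th}, rather than the other way around. Your symmetry argument via $\overline\iota(t)=\iota(t^{-1})$ and the $\vef(-1)$ bookkeeping is reasonable and could be made rigorous, but the existence part needs the direct explicit computation, not a reduction to germs.
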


\begin{proof} The theorem is trivial when $\EC= \{\G,1\}$. When $\EC=\{\Tef,\vef\}$
a proof is given in \cite[Lemma 2.1]{LL}. It 
is an explicit computation based on an argument borrowed from the proof of  \cite[lemma 7.3.2]{JL}. 
One should observe that  \cite{LL} assumes that the characteristic of $F$ is zero, 
but this assumption is seldom used
and, in particular, plays no role in the proof of  \cite[Lemma 2.1]{LL}.
One can also find a detailed proof in \cite{L1}.
\end{proof}

One should notice that the transfer depends on various choices, namely:
of Haar measures, of  $\cft$ and  $\tau$.

\section{Germ expansion for $SL(2)$ in arbitrary characteristic}

Consider first stable orbital integrals $\Sorb_\G(t,f)$.
Let us denote by $\bstnu$ the orbit under $\tG(F)$ conjugacy of $\bnu$.

\begin{lemma}\label{stable}
The stable orbital integrals $\Sorb_\G(t,f)$ has a germ expansion when $t\in T(F)$ is regular and tends to $z\in\Z(F)$. 
Namely, if $t$ is regular and close enough to $z$, one has the identity
$$\Sorb_\G(t,f)= \SGamma_z^{\G}(t)f(z)+ \SGamma_{{\iz \bstnu}}^{\G}(t)\Sorb_\G({z\bnu},f)$$
where $\SGamma^\G_\star:=\Gamma^\tG_\star$  are  Shalika's germs for $\tG(F)$.
\end{lemma}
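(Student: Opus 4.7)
The strategy is to reduce the statement to Shalika's germ expansion for $\tG = GL(2)$, whose hypotheses are satisfied in arbitrary characteristic.

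First, I would use Lemma \ref{tg} to identify the stable orbital integrals on $\G(F)$ with ordinary orbital integrals on $\tG(F)$. Since two elements of $\G(F)$ are stably conjugate if and only if they are $\tG(F)$-conjugate, the stable conjugacy class of any $x \in \G(F)$ equals its $\tG(F)$-orbit. Combined with the identity $H^0(F, \T\bsl\G) = \tT(F)\bsl\tG(F)$ recorded just before that lemma, this yields, for $t \in \T(F)$ regular semisimple,
$$\Sorb_\G(t,f) = \int_{\tT(F)\bsl\tG(F)} f(\tg\mun t \tg)\,d\dot\tg = \orb_\tG(t,f),$$
and similarly $\Sorb_\G(z\bnu,f) = \orb_\tG(z\bnu,f)$, once the Haar measures on $\tI_{x}(F)\bsl\tG(F)$ versus $\tZ(F)I_{x}(F)\bsl\tG(F)$ are chosen compatibly; the quotient between them has finite volume in both cases at hand and can be absorbed into the normalization.

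Next, I would invoke Shalika's germ expansion for $\tG$. The two hypotheses recalled in the introduction hold for $\tG = GL(2)$ in any characteristic: there are only two unipotent $\tG(F)$-conjugacy classes (the trivial one and $\bstnu$), and the centralizer $\tI_x$ of every element of $\tG(F)$ is smooth and connected, as shown in the proof of Lemma \ref{tg}, so no inseparability intervenes. Hence for $t \in \T(F)$ regular and sufficiently close to $z \in \Z(F) \subset \tZ(F)$,
$$\orb_\tG(t,f) = \Gamma^\tG_z(t)\, f(z) + \Gamma^\tG_{\bstnu}(t)\, \orb_\tG(z\bnu, f).$$
Substituting the identifications of the previous step and setting $\SGamma^\G_\star := \Gamma^\tG_\star$ produces the asserted expansion.

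The principal technical point is the first step: making the identity $\Sorb_\G = \orb_\tG$ precise with matching Haar measures on the various quotients. The resulting constants are independent of $t$ and can be absorbed into the choice of measures, but one must check that this normalization is consistent across the regular semisimple case $t$ and the unipotent case $z\bnu$, so that the same germs $\SGamma^\G_\star$ serve both sides of the expansion. Once that bookkeeping is in place, the conclusion is a direct appeal to the classical Shalika theory in the particularly simple case of $GL(2)$, where the finitely many unipotent classes and the smoothness of all stabilizers make the standard proof go through unchanged in positive characteristic.
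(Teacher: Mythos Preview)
Your proposal is correct and follows essentially the same route as the paper: identify $\Sorb_\G$ with an orbital integral on $\tG$ via Lemma~\ref{tg}, then apply Shalika's theorem for $\tG=GL(2)$, whose hypotheses hold in any characteristic. The only cosmetic difference is that the paper extends $f$ to some $\tf\in\ctyc(\tG(F))$ with $\tf|_{\G(F)}=f$ before writing $\orb_\tG(t,\tf)$, whereas you write $\orb_\tG(t,f)$ directly; since conjugation by $\tG(F)$ preserves $\G(F)$, both formulations amount to the same thing.
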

\begin{proof}
 We already observe that a stable orbital integral is nothing but
an orbital integral on $\tG(F)$: 
$$\Sorb_\G(t,f)=\orb_\tG(t,\tf)$$
where $\tf$ is any element in $\ctyc(\tG(F))$ such that $\tf|_{\G(F)}=f$ 
since, for $t\in\G(F)$, the restriction of $\tf$ to $\G(F)$ only matters for the computation of the orbital integral.
Assumptions of Shalika's theorem \cite{Sh}
are fulfilled for $\tG(F)$ in any characteristic
and hence stable orbital integrals for $\G$, viewed as orbital integrals for $\tG(F)$,
do have a germ expansion indexed by stable unipotent orbits in $\G(F)$, i.e.
usual unipotent orbits in $\tG(F)$.
\end{proof}
\begin{lemma}\label{unstable}
Let $\T$ be a torus isomorphic via $\iota$ to $\Tef$ and let $\EC=\{\Tef,\vef\}$. 
The $\kappa$-orbital integral $\orb_\G^\kappa(t,f)$ where $\kappa=\vef$,
has a germ expansion when $t\in T(F)$ is regular and tends to $z\in\Z(F)$. Namely,
if $t$ is regular and close enough to $z$ one has the identity
$$\orb_\G^\vef(t,f)=\Delta^\EC(t)\mun f^{\EC}(z)\ptf$$
\end{lemma}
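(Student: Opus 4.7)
The plan is to reduce the claim directly to the definition of endoscopic transfer, exploiting the triviality of orbital integrals on a torus.

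First I would observe that, since $\H = \Tef$ is abelian, every conjugacy class in $\H(F) = E^\times$ is a singleton; for any $g \in \ctyc(\H(F))$ and $h \in \H(F)$ we therefore have $\Sorb_\H(h, g) = g(h)$. Applying Definition \ref{def} to the injection $\iota: \T \to \H$ and a regular $t \in \T(F)$ yields
$$f^\EC(\iota(t)) = \Sorb_\H(\iota(t), f^\EC) = \Delta^\EC(t)\, \orb_\G^{\vef}(t, f),$$
which, since $\Delta^\EC(t) \ne 0$ for $t$ regular, rearranges to
$$\orb_\G^{\vef}(t, f) = \Delta^\EC(t)\mun\, f^\EC(\iota(t)).$$

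Next I would invoke the local constancy of the transfer. Because $f^\EC \in \ctyc(\H(F))$ is locally constant on the totally disconnected group $E^\times$ and $\iota$ is continuous, there is a neighbourhood of $z$ in $\T(F)$ on which $f^\EC(\iota(t)) = f^\EC(\iota(z))$. The element $\iota(z)$ is the common value of the two eigenvalues of the central element $z$, lies in $F^\times \subset E^\times$, and is independent of the choice of embedding $\iota$; writing $f^\EC(z) := f^\EC(\iota(z))$ is therefore unambiguous and gives the claimed identity.

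In essence there is no real obstacle here: once the existence of the transfer $f^\EC$ is granted by Theorem \ref{trans}, the lemma holds by unwinding definitions. The content is conceptual: when the endoscopic group is a torus, the endoscopic ``germ expansion'' collapses to a single term, with $\Delta^\EC(t)\mun$ playing the role of the germ and $f^\EC(z)$ the role of the $\kappa$-orbital integral at the unipotent, all dependence on the choices of $\cft$ and $\tau$ being absorbed into the transfer $f^\EC$ itself. This is precisely the substitute for Shalika's (ill-defined in characteristic $2$) expansion advertised in the introduction.
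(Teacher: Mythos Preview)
Your argument is correct and is exactly the paper's approach: the paper's proof is the single line ``This follows from \ref{trans}'', and you have simply unpacked what that entails (trivial stable orbital integrals on a torus, the defining identity of the transfer, and local constancy of $f^{\EC}$). One small slip: $\H(F)=\Tef(F)=E^1$, not $E^\times$ (you have confused $\Tef$ with $\tTef$), but this is harmless since $E^1$ is equally abelian and totally disconnected.
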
 
\begin{proof}This follows from \ref{trans}.
 \end{proof}
 
 These two lemmas together with the Fourier expansion
 $$\orb_\G(t,f)=\sum_{\kappa\in\KK}\orb_\G^\kappa(t,f)$$
 yield a germ expansion valid in arbitrary characteristics.
Let us make explict the various cases.
When $E$ is the quadratic field attached to an elliptic  torus $T$ and 
 $\EC=\{\Tef, \vef\}$ then, for $t\in\T(F)$ regular but close enough to $z\in\Z(F)$, one has
$$\orb_\G(t,f)= \SGamma_{\iz }^{\G}(t)f(z)+ \SGamma_{{\iz \bstnu}}^{\G}(t)\Sorb_\G({z\bnu},f)
+\Delta^\EC(t)\mun f^{\EC}(z)\ptf$$
When $\T$ is a split torus and $E=F\oplus F$ we have two ways for looking
at $\orb_\G(t,f)$ for $t\in\T(F)$
since, in such a case, $$ \orb_\G(t,f)= \Sorb_\G(t,f)=\orb_\G^\vef(t,f)\ptf$$
The split transfer yields readily an asymptotic expansion since,
 for $t$ close enough to $z\in\Z(F)$, one has
$$\orb_\G(t,f)=\Delta^\EC(t)\mun f^{\EC}({z})$$ 
where $f^{\EC}({z})$ is nothing but the stable orbital integral $\Sorb_\G(z\bnu,f)$. 
Hence for $\T$ split, $t\in\T(F)$ regular and close enough to $z\in\Z(F)$ one  also has
$$\orb_\G(t,f)=\Delta^\EC(t)\mun \Sorb_\G(z\bnu,f)\ptf$$ 

We may now summarize the previous observations.
We say that $\EC=\{\H,\kappa\}$ is an elliptic data if $\EC=\{\G,1\}$ or  $\EC=\{\Tef, \vef\}$ 
where $\kappa=\vef$ is non trivial.
We denote by $\ECell$ the set of elliptic data. Elliptic data are in bijection with $\KK$.
Consider a torus $\T$ and $t$ semisimple regular in $\T(F)$. Recall we have choosen
an isomorphism $\iota$ if $\T$ is isomorphic to $\Tef$. Now let
$$\Phi^\EC(t)=\begin{cases}\Delta^\EC(t)\mun\Sorb_\H(\iota(t),f^{\EC})
&\quad\hbox{if there is an embedding of $T$ in $\H$}
\cr\quad 0&\quad\hbox{otherwise}
\end{cases}$$

\begin{theorem} \label{th}
When $t\in\T(F)$ is regular but close enough to $z\in\Z(F)$ 
the orbital integral $\orb_\G(t,f)$ is a sum over elliptic endoscopic data:
$$\orb_\G(t,f)= \sum_{\EC=\{\H,\kappa\}\in\ECell}
\Phi^\EC(t)\ptf \leqno(1)$$
The stable orbital integral $ \Sorb_\H(f^{\EC}(\iota(t)))$ has a germ expansion indexed by 
the set $\SUC^\H$ of
stable unipotent conjugacy classes
in $\H(F)$:
$$\Sorb_\H(f^\EC(\iota(t)))=\sum_{\bstu\in\SU^\H} \SGamma^\H_{\iota(\iz) \bu}(\iota(t))
\Sorb_\H(\iota(z)\bu,f^\EC)\leqno{(2)}
$$
where $\bu$ is a  representatives of  $\bstu$.
\end{theorem}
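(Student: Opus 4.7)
The plan is to deduce (1) by combining Fourier inversion over $\KK$ with the definition of $\Phi^\EC$ and Theorem \ref{trans}, and to deduce (2) by a case analysis on $\H$, invoking Lemma \ref{stable} when $\H=\G$ and observing a triviality when $\H$ is a torus.

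For (1), I would start from the Fourier inversion
$$\orb_\G(t,f) = \sum_{\kappa \in \KK} \orb_\G^\kappa(t,f).$$
As already recorded in the discussion preceding Lemma \ref{stable}, for regular $t \in \T(F)$ the integrand $\tg \mapsto f(\tg\mun t \tg)$ is left-invariant under $\tT(F)$, so $\orb_\G^\kappa(t,f)$ vanishes unless $\kappa$ is trivial on $\tT(F)$, that is, unless $\kappa = 1$ or $\kappa = \vef$ where $E/F$ is the quadratic extension attached to $T$. Each surviving $\kappa$ corresponds to a unique elliptic datum, and for that datum $T$ does embed in the associated $\H$; Definition \ref{def} together with Theorem \ref{trans} then gives $\Phi^\EC(t) = \Delta^\EC(t)\mun \Sorb_\H(\iota(t), f^\EC) = \orb_\G^\kappa(t,f)$. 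For the elliptic data attached to quadratic extensions $E'/F$ distinct from $E$, the torus $T$ does not embed in the associated endoscopic torus, so $\Phi^\EC(t)=0$ by definition, which matches the vanishing of the corresponding $\kappa$-orbital integral. Summing over $\ECell$ then reproduces $\orb_\G(t,f)$.

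For (2), I would split according to $\H$. If $\EC=\{\G,1\}$, then $\H=\G$, $\iota$ is the identity, $\Delta^\EC\equiv 1$, and one may take $f^\EC$ to be $f$ itself. By Lemma \ref{tg} the stable unipotent conjugacy classes in $\G(F)$ coincide with the $\tG(F)$-conjugacy classes of unipotents, namely the trivial class and $\bstnu$, so the asserted expansion reduces verbatim to Lemma \ref{stable}. If $\EC=\{\Tef,\vef\}$, then $\H=\Tef$ is a torus; the only stable unipotent class is the identity, $\Sorb_\H(\iota(t),f^\EC)$ is simply $f^\EC(\iota(t))$, and since $f^\EC$ is locally constant one has $f^\EC(\iota(t))=f^\EC(\iota(z))$ for $t$ close enough to $z$. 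The germ expansion then holds trivially with the single germ identically equal to $1$.

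The main obstacle has been dealt with upstream: Lemma \ref{stable} rests on the identity $\Sorb_\G(t,f)=\orb_\tG(t,\tf)$ combined with Shalika's theorem for $\tG=GL(2)$ in arbitrary characteristic, while Lemma \ref{unstable} rests on Theorem \ref{trans}, whose proof in characteristic $2$ is the non-trivial input supplied by \cite{LL} and \cite{L1}. Given these results the theorem itself is a bookkeeping argument; the one point demanding care is the matching between the surviving terms in Fourier inversion and the non-zero $\Phi^\EC(t)$, which comes down to the compatibility between $T$ being isomorphic to $\Tef$ and $\vef$ being trivial on $\tT(F)$.
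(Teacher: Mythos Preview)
Your proposal is correct and follows essentially the same approach as the paper: Fourier inversion over $\KK$ combined with Theorem~\ref{trans}, Lemma~\ref{stable}, and (for the torus case of~(2)) the content of Lemma~\ref{unstable}, which you recover directly from local constancy of $f^\EC$. Your write-up is in fact more explicit than the paper's two-line proof, and the one extra observation you make---that (1) actually holds for all regular $t$, not just those near $z$---is correct as well.
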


\begin{proof}We first recall that
$$\orb_\G(t,f)=\sum_{\kappa\in\KK}\orb_\G^\kappa(t,f)$$
and we conclude thanks to \ref{trans}, \ref{stable} and \ref{unstable}.
\end{proof}

\section{A conjecture}\label{conj}

In this section $\G$ is an arbitrary connected quasi-split reductive groupe over a non archimedean local field $F$. 
A naive conjecture would be that theorem \ref{th} is valid in general.
Already  assertion $(1)$ is known to hold at least for groups over local fields 
of characteristic zero and is expected to hold in general. 
But, as regards the second assertion, some care is in order.

Firstly, in positive characteristics, there may exist a primitive 
rational element $\gamma\in\G(F)$ (i.e. that do not belong to any non
trivial rational parabolic subgroup) that is conjugate over the algebraic closure to  $u\in\U(F)$
where $U$ is the unipotent radical of some parabolic subgroup $P$ defined over $F$. For example, 
if $F=\FM_2((X))$ and if we denote by $[M]$ the class modulo scalar matrices of a matrix $(M)$, then
$$\gamma=\begin{bmatrix}0&X\cr1&0\cr\end{bmatrix}
\com{is conjugate over $\FM_2((X^{1/2}))$ to}u=[\bnu]=\begin{bmatrix}1&1\cr0&1\cr\end{bmatrix}$$  
in $PGL(2,F)$. We have to consider a more restrictive notion of unipotent elements:
We call an  element  $\bu\in\G(F)$ true unipotent if it belongs to $U(F)$
where $U$ is the unipotent radical of some parabolic subgroup $P$ defined over $F$.  

An even more serious difficulty is that the vector space $\SUC_\G$ of stable distributions with true 
unipotent support is quite mysterious. 
 Already, for fields of characteristic zero, the vector space $\SUC_\G$ 
does not admits the set $\SUC^\G$, of conjugacy classes over the algebraic closure of 
rational unipotent elements, as a basis.
In fact, according to \cite{WS}, non special geometric orbits do not support any non trivial stable distribution while
special ones may support a space of dimension greater than 1 of stable distributions.

 Given a distribution $D\in \SUC_\G$ and a function $f\in\ctyc(\G(F))$
we denote by  $\langle D,f\rangle$ the value of $D$ on $f$. Let $\G_{reg}$ be the set of regular semisimple 
elements in $\G$.

\begin{conjecture} \label{conje}
Assume $\G$ is quasi-split. \pni
(1) The vector space $\SUC_\G$ is finite dimensional.\pni
(2) There is a map
$$\SGamma_\G:\G_{reg}(F)\to\SUC_\G$$
such that:
\pni (a)  the distribution $\SGamma_\G(t)$ depends only on the stable
conjugacy class of $t\in\G_{reg}(F)$,
\pni(b) for $f\in\ctyc(\G(F))$ and $\iz\in\Z(F)$ the center of $\G(F)$, there is a neighbourhood $V$ of $\iz$
 such that, for $t\in V\cap\G_{reg}(F)$, there is an identity:
$$\Sorb_\G(t,f)=\langle \SGamma_\G(t),f\rangle\ptf$$
\end{conjecture}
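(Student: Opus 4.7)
The plan is to reduce the conjecture, via descent to the Lie algebra, to a stable version of Shalika's germ expansion together with (partially conjectural) results on stable distributions supported on the nilpotent cone. First I would pick, in a neighbourhood \(V\) of \(z\in\Z(F)\), a Cayley-type or truncated logarithm transform intertwining \(\G\)-conjugation near \(z\) with \(\G\)-conjugation near \(0\) on \(\mathfrak{g}(F)\). Under such a map the distributions in \(\SUC_\G\) correspond, on \(V\), to stable distributions on \(\mathfrak{g}(F)\) supported on the nilpotent cone \(\mathcal{N}\). Assertion~(1) then reduces to the Lie-algebra statement: in characteristic zero this is due to Waldspurger--Shelstad (only special orbits support a non-zero stable distribution, each contributing a finite-dimensional space), and the approach in positive characteristic is to port those arguments, exploiting that \(\G\) is quasi-split so that the Bala--Carter combinatorics governing \(\mathcal{N}\) essentially survives.

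For assertion~(2), fix a basis \(D_1,\dots,D_N\) of \(\SUC_\G\). The goal is a \emph{stable Shalika expansion}: scalar germs \(\Gamma_i(t)\), defined for \(t\) regular semisimple near \(z\), depending only on the stable conjugacy class of \(t\), such that
\[
\Sorb_\G(t,f)=\sum_{i=1}^{N}\Gamma_i(t)\,\langle D_i,f\rangle,
\]
after which the map \(\SGamma_\G(t):=\sum_i \Gamma_i(t)\,D_i\) answers the conjecture. To produce the expansion I would begin with Shalika's ordinary germ expansion (available in large residual characteristic by work of DeBacker), sum the resulting germs over each stable conjugacy class of rational unipotents, and project onto the stable quotient \(\SUC_\G\). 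Stability of each \(\Gamma_i\), and its dependence only on the stable class of \(t\), would then follow from uniqueness: the left-hand side is manifestly stable and the \(D_i\) are linearly independent as stable distributions. This is the conceptual generalisation of Lemma~\ref{stable}, where the passage from \(\G=SL(2)\) to \(\tG=GL(2)\) precisely realises the projection onto the one-dimensional space \(\SUC_{SL(2)}\).

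A subtle but essential refinement, already flagged in section~\ref{conj}, is that the indexing set of the expansion must consist of stable classes of \emph{true} unipotent elements: in small characteristic there exist rational elements geometrically conjugate to elements of \(U(F)\) that belong to no rational parabolic, and such classes must be excluded. Equivalently, the homogeneity properties of the germs must be established with respect to a one-parameter subgroup acting by conjugation on \(U(F)\), rather than via a naive dilation on all of \(\mathcal{N}\); this replaces, in arbitrary characteristic, the \(\GM_m\)-action coming from the exponential map.

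The main obstacle, in my view, is the stable Shalika expansion itself in positive characteristic. DeBacker's results on ordinary germ expansions presuppose tame residual characteristic, and the passage from ordinary to stable germs in characteristic zero relies heavily on the exponential map and on local integrability of characters, neither of which is available in small characteristic. A complete proof will therefore most likely require, as a prerequisite, a systematic redevelopment of the relevant harmonic analysis on \(\G(F)\) using geometric tools---true unipotent classes, parahoric subgroups, Moy--Prasad filtrations---that make sense uniformly in the characteristic; the \(SL(2)\) analysis of the present paper then being the first non-trivial instance of this programme.
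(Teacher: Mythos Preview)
The statement you are addressing is a \emph{conjecture}, and the paper offers no proof of it; indeed the author states explicitly that ``the state of art, in (small with respect to the order of the Weyl group) positive characteristic, does not allow to go much farther for the time being.'' So there is no proof in the paper against which to compare your proposal.

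Your text is not a proof either, and you say so yourself: the ``main obstacle'' you identify (the stable Shalika expansion in positive characteristic, together with the unavailability of the exponential map and of local integrability results) is exactly the obstacle that makes this a conjecture rather than a theorem. What you have written is a reasonable research outline --- descent to the Lie algebra, reduction of (1) to finite-dimensionality of stable nilpotent distributions, and an attempt to obtain (2) by stabilising ordinary Shalika germs --- and it is broadly consonant with the motivation the paper gives in section~\ref{conj}. But several of the steps you invoke are themselves open in the generality required: the Cayley/log transform and the Bala--Carter description of $\mathcal{N}$ are not known to behave well in small characteristic, DeBacker's germ results need tameness hypotheses you cannot assume, and the ``projection onto the stable quotient'' presupposes precisely the structure of $\SUC_\G$ that assertion~(1) asks for. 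In short, your proposal is a plausible strategy, not a proof, and it does not go beyond what the paper already flags as unresolved.
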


\bigskip

\section*{acknowledgements}
I must thank Waldspurger for pointing out a major mistake in an earlier formulation
of conjecture \ref{conje}. It is my pleasure to also thank Rapha\"el Beuzart-Plessis and Bertrand Lemaire for
useful comments.

\vfe

\end{document}